\newtheorem{thm}{Theorem}
\newtheorem{lem}[thm]{Lemma}
\theoremstyle{definition}
\newtheorem{defn}[thm]{Definition}
\theoremstyle{remark}
\newtheorem{rem}[thm]{Remark}
\newcommand{\DeclareAutoPairedDelimiter}[3]{%
\expandafter\DeclarePairedDelimiter\csname Auto\string#1\endcsname{#2}{#3}%
\begingroup\edef\x{\endgroup
\noexpand\DeclareRobustCommand{\noexpand#1}{%
\expandafter\noexpand\csname Auto\string#1\endcsname*}}%
\x}
\DeclareAutoPairedDelimiter{\abs}{\lvert}{\rvert}
\DeclareAutoPairedDelimiter{\norm}{\lVert}{\rVert}
\DeclareAutoPairedDelimiter{\bra}{(}{ )}
\DeclareAutoPairedDelimiter{\pra}{[}{]}
\DeclareAutoPairedDelimiter{\set}{\{}{\}}
\DeclareAutoPairedDelimiter{\skp}{\langle}{\rangle}
\DeclareMathAlphabet{\mathup}{OT1}{\familydefault}{m}{n}
\newcommand{\dH}[1]{\mathop{}\!\mathup{d} \mathcal{H}^{#1}}
\let\div\relax
\DeclareMathOperator{\div}{div}
\newcommand{\N}{\mathds{N}}
\newcommand{\R}{\mathds{R}}
\renewcommand{\H}{\ensuremath{\mathcal{H}}}
\newcommand{\cC}{\ensuremath{\mathcal C}}
\newcommand{\cG}{\ensuremath{\mathcal G}}
\newcommand{\cH}{\ensuremath{\mathcal H}}
\newcommand{\cK}{\ensuremath{\mathcal K}}
\newcommand{\cP}{\ensuremath{\mathcal P}}
\definecolor{darkblue}{rgb}{0,0,0.6}
\title[Compact coincidence sets in the fractional obstacle problem]{Characterizing compact coincidence sets in the thin obstacle problem and the obstacle problem for the fractional Laplacian}
\author{Simon Eberle$^1$}
\address{$^1$Faculty of Mathematics, University of Duisburg-Essen, Germany}
\email{simon.eberle@uni-due.de}
\author{Xavier Ros-Oton$^2$}
\address{$^2$Institute of Mathematics, University of Zurich, Switzerland}
\email{xavier.ros-oton@math.uzh.ch}
\author{Georg S. Weiss$^1$}
\email{georg.weiss@uni-due.de}
\let\rho\varrho
\let\phi\varphi
\let\epsilon\varepsilon
\begin{document}

\maketitle

\begin{abstract}
In this paper we give a full classification of global solutions of the obstacle problem for the fractional Laplacian (including the thin obstacle problem) with compact coincidence set and at most polynomial growth in dimension $N \geq 3$. We do this in terms of a bijection onto a set of polynomials describing the asymptotics of the solution. Furthermore we prove that coincidence sets of global solutions that are compact are also convex if the solution has at most quadratic growth. 
\end{abstract}

\section{Introduction}

The characterization of global solutions is an important tool in the analysis of free boundaries and singularities. In the classical obstacle problem this problem has been studied extensively for many decades. There the first results ---almost 90 years old--- originated in potential theory, characterizing null quadrature domains. In 1931 P. Dives showed for $N=3$ in \cite{dive} that compact coincidence sets of global solutions of the classical obstacle problem are ellipsoids.  This fact was reproved by H. Lewy in \cite{Lewy79} in 1979. In 1981 M. Sakai gave a full classification of global solutions of the classical obstacle problem in $N=2$ using complex analysis \cite{Sakai}. Only a few years later E. DiBenedetto-A. Friedman \cite{DiBenedettoFriedman} and A. Friedman-M. Sakai \cite{FriedmanSakai} proved that in each dimension $N \geq 3$, bounded coincidence sets of global solutions of the classical obstacle problem are ellipsoids. In a recent short note two of the authors gave a short proof of this known fact \cite{ellipsoid}.
As to unbounded coincidence sets a complete characterization is still an unsolved problem
(cf. \cite[conjecture on p. 10]{Shahgholian92_conjecture} and
\cite[Conjecture 4.5]{KarpMargulis_bounded_sources}).
However, two of the authors (in collaboration with Henrik Shahgholian) achieved a first partial result towards the classification of global solutions of the classical obstacle problem with \emph{unbounded} coincidence set in dimensions $N \geq 6$ \cite{esw}.
\\
To the best knowledge of the authors a classification of global solutions in the obstacle problem for the fractional Laplacian, including the thin obstacle problem, is completely open.
\\
A major difference between global solutions of the classical obstacle problem and global solutions of the  obstacle problem for the fractional Laplacian is that in the classical obstacle problem non-trivial global solutions have quadratic growth towards infinity, while in the obstacle problem for the fractional operator no a-priori estimate on the growth is available. Another difficulty in the obstacle problem for the fractional Laplacian compared to the classical obstacle problem is that the relation between global solutions and potentials of the respective coincidence set is more involved.
Those two differences to the classical obstacle problem are the reason 
for the more abstract form of the following 
characterization of global solutions.

By an observation by L. Caffarelli and S. Silvestre 
it is sufficient to study a weighted local problem instead of the 
obstacle problem for the fractional Laplacian:
for $s \in (0,1)$ the obstacle problem for the fractional $s$-Laplacian with rapidly decreasing obstacle $\phi \in C^2(\R^N)$ given by
\begin{alignat}{3}
	w &\geq \phi \quad \text{ in } \R^N, \\
	(-\Delta)^s w &= 0 \quad \text{ in } \set {w>\phi} \\
	(-\Delta)^s w &\geq 0 \quad  \text{ in } \R^N
\end{alignat}
is in the case of $\Delta \phi \equiv \text{ const}$\footnote{Note that his is the common assumption also for the obstacle problem for the Laplacian cf. \cite[Definition (Normalized Solutions) p. 387]{Caffarelli-revisited}} equivalent (cf. \cite{CaffarelliSilvestre_Extension_problem} or \cite[derivation of (2.2)-(2.5)]{CaffarelliSalsaSilvestre_Regularity_estimates_fractional_Laplacian}) to the higher dimensional \emph{local} problem with density
\begin{align}\tag*{$(*)_a$} \label{eq:equivalent_local_problem}
\left .
\begin{alignedat}{3}
	u &\geq 0 \quad &&\text{ in } \R^N \times \set {0}, \\
	u(x',x_{N+1}) &= u(x',-x_{N+1}) \quad &&\text{ for all } (x',x_{N+1}) \in \R^N \times \R, \\
	\div \bra { \abs{x_{N+1}}^a ~\nabla u   } &= 0 \quad &&\text{ in } \R^{N+1} \setminus \set {x_{N+1}=0, u=0}, \\ 
		\div \bra { \abs{x_{N+1}}^a ~\nabla u   } &\leq 0 \quad &&\text{ in } \R^{N+1} 
\end{alignedat}
~ \right \}
\end{align}
(which is to be understood in the sense of distributions). Here $a = 1-2s$. In the following we will work with the local formulation of the problem.
 In order to abbreviate notation we will from now on set $L_a u :=  	\div \bra { \abs{x_{N+1}}^a~ \nabla u   } $ and $\cC := \set {u=0} \cap \set {x_{N+1}=0}$.

In this equivalent local formulation we are going to characterize
\begin{align}
	\cG_c := \set { u \in C(\R^{N+1}) : u \text{ satisfies  \ref{eq:equivalent_local_problem}  and Definition  \ref{def:at_most_polynomial_growth} and has bounded coincidence set } \cC    }
\end{align}
 and we will do this in terms of the set of polynomials
\begin{align}
	\cP_0' &:= \left \{p \in \cP(\R^{N+1}) : L_a p \equiv 0 \text{ in } \R^{N+1} ~,~ p(x',0) > 0 \text{ for sufficiently large } \abs{x'},   \right . \\
	&\qquad \left . p \text{ symmetric w.r.t. }  \R^N \times \{0\}   \right \}.
\end{align}
With these two definitions at hand, our main result reads as follows.

\begin{thm} \label{thm:characterization of_global_solutions_of_thin_obst_prob_with_bounded_coincidence_set_and_at_most_poly_growth}
\mbox{}\\
Let $N\ge 3$. Moreover, let $u \in \cG_c$ be any solution of \ref{eq:equivalent_local_problem} with bounded coincidence set and at most polynomial growth. Then, there exists a unique polynomial $p \in \cP_0'$, such that 
\begin{align}\label{eq:definition_of_S}
u(x) = p(x) + v_p(x),
\end{align}
where $v_p \to 0$ as $|x|\to\infty$, and $v_p$ is the unique solution of
\begin{align}\label{eq:thin_obstacle_problem_zero_boundary_values}
\left .
\begin{alignedat}{3}
v_p &\geq -p \quad &&\text{ in } \R^N \times \set {0}, \\
	v(x',x_{N+1}) &= v(x',-x_{N+1}) \quad &&\text{ for all } (x',x_{N+1}) \in \R^N \times \R, \\
L_a v_p &=0 \quad &&\text{ in } \R^{N+1} \setminus \set {x_{N+1}=0~,~ v_p=-p }, \\
L_a v_p &\leq 0 \quad &&\text{ in } \R^{N+1}, \\
v_p(x) &\to 0 \quad &&\text{ uniformly as } \abs{x} \to \infty.
\end{alignedat}
~ \right \}
\end{align}
Conversely, for any polynomial $p \in \cP_0'$, we have that \eqref{eq:definition_of_S} defines a solution of \ref{eq:equivalent_local_problem} with bounded coincidence set.
\\
More precisely, the map $S: \cP_0' \to \cG_c$,  defined by $S(p):=p+v_p$, is a bijection.
\end{thm}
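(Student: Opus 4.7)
The plan is to construct both $S$ and $S^{-1}$ explicitly. For $S^{-1}$, given $u\in\cG_c$ I extract a polynomial asymptotic profile $p$ such that $u-p\to 0$ at infinity; for $S$, given $p\in\cP_0'$ I solve the obstacle problem \eqref{eq:thin_obstacle_problem_zero_boundary_values} to obtain $v_p$. Bijectivity then follows from the rigidity that only the zero polynomial can decay at infinity.

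\textbf{Extracting $p$ from $u$ (surjectivity of $S$).} Since $\cC$ is compact, $L_a u = 0$ holds in $\R^{N+1}\setminus B_R$ for some $R$, and $u$ is symmetric in $x_{N+1}$. The operator $L_a$ admits a separation-of-variables expansion in spherical $L_a$-harmonics, so any even (in $x_{N+1}$) $L_a$-harmonic function in the exterior decomposes as
\begin{equation*}
u(x) = \sum_{k\geq 0} p_k(x) + \sum_{k\geq 0} q_k(x),
\end{equation*}
where $p_k$ are homogeneous even $L_a$-harmonic polynomials of degree $k$ and $q_k$ are the corresponding modes of negative homogeneity, which decay at infinity. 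The at-most-polynomial-growth hypothesis truncates the first sum at a finite degree, yielding a polynomial $p$, and $v_p := u - p$ vanishes at infinity. The pointwise relations in \ref{eq:equivalent_local_problem} then translate verbatim into \eqref{eq:thin_obstacle_problem_zero_boundary_values} with obstacle $-p$. Finally, $u\geq 0$ on $\set{x_{N+1}=0}$, $v_p\to 0$, and compactness of $\cC$ together force $p(x',0)>0$ for large $|x'|$, so $p\in\cP_0'$.

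\textbf{Constructing $v_p$ from $p$ (well-definedness of $S$).} For $p\in\cP_0'$ the obstacle $-p$ is strictly negative on $\set{x_{N+1}=0}$ outside a compact set. I would solve \eqref{eq:thin_obstacle_problem_zero_boundary_values} variationally on balls $B_R$ with zero boundary data by minimising the weighted Dirichlet energy $\int |x_{N+1}|^a |\nabla v|^2$ over the symmetric convex constraint set $\set{v\geq -p \text{ on } \set{x_{N+1}=0}}$, and then pass to the limit $R\to\infty$ using monotonicity and barriers built from the $L_a$-Poisson kernel on the half-space. Uniqueness among decaying solutions is a comparison argument: the difference of two candidates is $L_a$-harmonic outside the compact joint contact set and tends to zero at infinity, so the maximum principle forces it to vanish. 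Setting $u := p + v_p$, every line of \ref{eq:equivalent_local_problem} holds, and compactness of $\set{u=0}\cap\set{x_{N+1}=0}$ follows because $v_p\to 0$ while $-p<0$ on $\set{x_{N+1}=0}\setminus B_{R_0}$.

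\textbf{Bijection and the main obstacle.} Injectivity is immediate: if $p_1 + v_{p_1} = p_2 + v_{p_2}$, then $p_1 - p_2$ is a polynomial tending to zero at infinity, hence identically zero, forcing $v_{p_1} = v_{p_2}$. Surjectivity is the extraction step above. The main technical obstacle is the exterior expansion: it requires the degenerate-elliptic theory for $L_a$ in the $A_2$-weighted setting, a Liouville theorem classifying $L_a$-harmonic functions of polynomial growth symmetric in $x_{N+1}$ as polynomials, and the precise Poisson-kernel asymptotics to produce decaying barriers that are used both in the expansion and in the construction of $v_p$. Once those tools are in place, the variational existence, the comparison argument, and the bookkeeping $S\circ S^{-1}=\mathrm{id}$, $S^{-1}\circ S=\mathrm{id}$ are routine.
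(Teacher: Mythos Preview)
Your overall architecture matches the paper's: construct $S$ by solving the auxiliary obstacle problem \eqref{eq:thin_obstacle_problem_zero_boundary_values}, construct $S^{-1}$ by splitting off a polynomial, and get injectivity from the trivial fact that a decaying polynomial is zero. The differences are in how you carry out two of the steps.

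\emph{Surjectivity.} You propose an exterior spherical-harmonic expansion for $L_a$ to separate $u$ into a polynomial part and a decaying remainder. The paper instead argues potential-theoretically: it writes down directly
\[
v(x)=\alpha_{N+1+a}\int \frac{-2\,[\,|y_{N+1}|^a\partial_{N+1}u\,]_+(y')}{|x-y|^{N-1+a}}\,\dH{N}\lfloor_{\{y_{N+1}=0,\,u=0\}}(y),
\]
observes that the density is bounded and compactly supported (regularity plus $\cC$ compact), so $v\to 0$ at infinity, and then $L_a(u-v)=0$ globally with polynomial growth forces $u-v$ to be a polynomial via the Liouville theorem of Caffarelli--Salsa--Silvestre. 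This is shorter and avoids building the weighted spherical-harmonic machinery (basis, convergence of the negative-homogeneity series, uniform decay of the tail); your route would work but needs that theory as input.

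\emph{Existence of $v_p$.} You minimise the weighted Dirichlet energy on balls and pass to the limit; the paper uses Perron's method, taking the infimum over symmetric $L_a$-supersolutions above $-p$ that vanish at infinity, with the explicit potential $w_c(x)=\int_{\{-p>0\}\cap\{x_{N+1}=0\}} c\,|x-y|^{-(N-1+a)}\,\dH{N}(y)$ serving both as a witness that the admissible class is nonempty and as the barrier giving $0\le v_p\le w_c\to 0$. Either method is fine.

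\emph{One inaccuracy.} In your uniqueness sketch you say ``the difference of two candidates is $L_a$-harmonic outside the compact joint contact set''. That is not what you use: the difference is only an $L_a$-\emph{supersolution} outside the contact set of one of them (since $L_a v_1\le 0$ everywhere while $L_a v_2=0$ there). The correct argument is a one-sided comparison: in $\R^{N+1}\setminus\{v_2=-p\}$ one has $L_a(v_1-v_2)\le 0$, on $\{v_2=-p\}$ one has $v_1-v_2\ge 0$, and at infinity $v_1-v_2\to 0$, so the minimum principle gives $v_1\ge v_2$; then swap roles. This is exactly the content of the paper's comparison lemma, so your idea is right but the phrasing should be corrected.
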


\begin{rem}
Note that the asymptotics of a solution $u \in \cG_c$ is given by $p= S^{-1}(u)$. This means that $S$ is also a bijection between solutions having at most/exactly growth of order $m$ and polynomials of degree $m$ / homogeneous polynomials of degree $m$.
Therefore in the case of solutions with quadratic growth we have proved a more abstract (and weaker) version of the bijection constructed in \cite[(5.4) therein]{DiBenedettoFriedman}. We conjecture that coincidence sets need in general not be ellipsoids.
\end{rem}

Furthermore ---as in the classical obstacle problem--- global solutions of \ref{eq:equivalent_local_problem} with quadratic growth and bounded coincidence set are convex in the plane $\set {x_{N+1}=0}$, and their coincidence sets are convex.

\begin{thm}[Solutions with quadratic growth have convex coincidence set] \label{thm:convexity_concavity_coincidence_set}
\mbox{} \\
Let $u \in \cG_c$ be a global solution of the equivalent local problem \ref{eq:equivalent_local_problem} with bounded coincidence set and with quadratic growth in the sense that there is $C<+\infty$ such that
\begin{align} \label{eq:quadratic_growth}
 \abs{u(x)} \leq C(\abs{x}^2+1) \quad \text{ for all } x \in \R^{N+1}.
\end{align} 
Then
\begin{align}
\partial_{ee} u &\geq  0 \quad \text{ in } \R^{N+1}\text{ for all } e \in \set {x_{N+1} =0} \cap \partial B_1,\text{ and } \\
\partial_{N+1} \bra { \abs{x_{N+1}}^a ~\partial_{N+1}u   } &\leq 0 \quad \text{ in } \R^{N+1}
\end{align}
Moreover, the coincidence set $\set {u=0} \cap \set {x_{N+1}=0}$ is convex.
\end{thm}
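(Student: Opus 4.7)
My plan is to first establish the tangential convexity $\partial_{ee} u \ge 0$ for every tangential unit $e$; the weighted vertical inequality and the convexity of $\cC$ then follow quickly. I start by extracting the structure of the asymptotic polynomial: by Theorem~\ref{thm:characterization of_global_solutions_of_thin_obst_prob_with_bounded_coincidence_set_and_at_most_poly_growth}, write $u = p + v_p$ with $v_p \to 0$ at infinity; quadratic growth of $u$ together with boundedness of $v_p$ forces $\deg p \le 2$; the symmetry in $x_{N+1}$ gives $p(x',x_{N+1}) = q(x') + \gamma x_{N+1}^2$; the condition $L_a p \equiv 0$ forces $\Delta_{x'} q + 2(1+a)\gamma = 0$; and $p \in \cP_0'$ forces $q(x') > 0$ for $|x'|$ large. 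Hence either $\gamma=0$ and $q$ is a positive constant, or $q$ is a true quadratic with positive semi-definite Hessian, and in both cases $\partial_{ee}p = \partial_{ee}q \ge 0$ is a non-negative constant.

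The heart of the argument is a comparison. Fix a tangential unit $e$ and $h>0$, and set $U_h(x) := \tfrac12(u(x+he)+u(x-he))$. Then $U_h$ is super-$L_a$-harmonic (a sum of two such), non-negative on $\{x_{N+1}=0\}$, and $U_h \to \tilde p := p + \tfrac{h^2}{2}\partial_{ee}p$ as $|x|\to\infty$; adding a non-negative constant preserves the defining properties of $\cP_0'$, so $\tilde p \in \cP_0'$. I then apply Theorem~\ref{thm:characterization of_global_solutions_of_thin_obst_prob_with_bounded_coincidence_set_and_at_most_poly_growth} twice. First, the variational characterisation of $S(\tilde p)$ as the minimal super-$L_a$-harmonic competitor which is non-negative on $\{x_{N+1}=0\}$ and has asymptotic $\tilde p$ gives $U_h \ge S(\tilde p)$. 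Second, since $\tilde p \ge p$ pointwise, the function $w := \min(S(\tilde p), u)$ is super-$L_a$-harmonic, non-negative on $\{x_{N+1}=0\}$, has asymptotic $p$ at infinity, and satisfies $w \le u$; minimality of $u=S(p)$ forces $w \equiv u$, giving $S(\tilde p) \ge u$. Combining, $U_h \ge u$, which is tangential midpoint-convexity of $u$, and continuity of $u$ upgrades this to full convexity in the direction $e$, whence $\partial_{ee} u \ge 0$.

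For the remaining two claims, the expansion
\begin{align}
L_a u = |x_{N+1}|^a \Delta_{x'} u + \partial_{N+1}(|x_{N+1}|^a \partial_{N+1} u),
\end{align}
combined with $\Delta_{x'} u \ge 0$ (by the above, summing over an orthonormal tangential basis) and $L_a u \le 0$, yields $\partial_{N+1}(|x_{N+1}|^a \partial_{N+1} u) \le 0$. Finally, $u|_{\{x_{N+1}=0\}}$ is a non-negative convex function on $\R^N$ whose zero set is exactly $\cC$, and the zero set of a non-negative convex function is always convex.

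The main obstacle is the comparison chain $U_h \ge S(\tilde p) \ge u$. The monotonicity $p \le \tilde p \Rightarrow S(p) \le S(\tilde p)$ does not follow from the bijection statement alone; it requires a variational or minimal-super-solution characterisation of $S$, verification that pointwise minima of bounded super-$L_a$-harmonic functions remain super-$L_a$-harmonic in the weighted degenerate-elliptic setting, and a minimum principle ensuring that asymptotic domination at infinity yields pointwise domination everywhere.
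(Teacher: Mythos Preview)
Your argument is correct and takes a genuinely different route from the paper. Both proofs reduce to the same second-difference inequality $u(x+he)+u(x-he)\ge 2u(x)$, but they reach it by distinct mechanisms. The paper works locally: it shows $u_h(x):=u(x+h)-2u(x)+u(x-h)$ satisfies $L_a u_h\le 0$ off the coincidence set, observes $u_h\ge 0$ on $\cC$ trivially, and uses the integral representation \eqref{eq:representation_of_v_p} to get decay of $D^2 v_p$ so that, in the nontrivial case where $\partial_{ee}p>0$ strictly, $u_h\ge 0$ on a large sphere $\partial B_R$; the minimum principle for $L_a$ on $B_R\setminus\cC$ then finishes. Your approach is global and variational: you exploit that $v_p$ in Theorem~\ref{thm:characterization of_global_solutions_of_thin_obst_prob_with_bounded_coincidence_set_and_at_most_poly_growth} is built as a Perron infimum, so $S(p)$ is the \emph{least} symmetric $L_a$-supersolution that is nonnegative on $\{x_{N+1}=0\}$ and has asymptotic $p$; then $U_h-\tilde p\in\cK_{\tilde p}$ gives $U_h\ge S(\tilde p)$, and $\min(S(\tilde p),u)-p\in\cK_p$ gives $S(\tilde p)\ge u$. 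What your approach buys is that it never needs the pointwise decay estimate on $D^2 v_p$, and it handles the degenerate case $\partial_{ee}p=0$ uniformly (there $\tilde p=p$ and the chain collapses to $U_h\ge S(p)=u$). What the paper's approach buys is that it avoids the two auxiliary facts you correctly flag as needing verification: that the pointwise minimum of two continuous $L_a$-supersolutions is again one (true for $A_2$-weighted divergence-form operators via the comparison principle the paper already cites from Fabes--Kenig--Serapioni), and the monotonicity $p\le\tilde p\Rightarrow S(p)\le S(\tilde p)$, which is not part of the bijection statement itself but follows from the Perron construction in Step~1 of the paper's proof of Theorem~\ref{thm:characterization of_global_solutions_of_thin_obst_prob_with_bounded_coincidence_set_and_at_most_poly_growth}. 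Your derivations of $\partial_{N+1}(|x_{N+1}|^a\partial_{N+1}u)\le 0$ and of the convexity of $\cC$ coincide with the paper's.
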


\begin{defn}[At most polynomial growth] \label{def:at_most_polynomial_growth}
		\mbox{}\\
		We say that a global solution $u$ of the equivalent local problem \ref{eq:equivalent_local_problem} has \emph{at most  polynomial growth} if there is $m \in \N$ and $C>0$ such that
		\begin{align}
		\abs{u(x)} \leq C (1+\abs{x}^m) \quad \text{ for all } x \in \R^{N+1}.
		\end{align}
\end{defn}

\section{Notation}
Throughout this work $\R^N$ will be equipped with the Euclidean inner product $x \cdot y$ and the induced norm $\abs{x}$. Due to the nature of the problem we will often write $x \in \R^{N+1}$ as $x= (x',x_{N+1}) \in \R^{N} \times \R$. The set $B_r(x)$ will be the open $(N+1)$-dimensional ball of center $x$ and radius $r$. Whenever the center is omitted it is assumed to be $0$.
\\
The measure $\cH^{N}$ denotes the $N$-dimensional Hausdorff measure.
By $\cP(\R^N)$ we mean the set of all (real) polynomials in $\R^N$. 

\section{Main part}

Let us now turn to the proof of our two main theorems.

\begin{proof}[Proof of Theorem \ref{thm:characterization of_global_solutions_of_thin_obst_prob_with_bounded_coincidence_set_and_at_most_poly_growth}] \mbox{}\\
\textbf{Step 1.} \emph{The map $S$ is well defined.}\\
For each $p \in \cP_0'$, $S(p)$ is a solution of \ref{eq:equivalent_local_problem}. Thus all we need to check is existence and uniqueness of $v_p$ and compactness of the coincidence set of $v_p$ which will imply compactness of the coincidence set of $S(p)$.\\
\underline{Existence of $v_p$:} We construct a solution of \eqref{eq:thin_obstacle_problem_zero_boundary_values} using Perron's method as the (pointwise) infimum of all $v \in \cK_p$, where
\begin{align}
	\cK_p := \left \{v \in C(\R^{N+1}) : v \text{ satisfies in the sense of distributions } L_a v \leq 0 \text{ in } \R^{N+1}, \right .\\
	\left . v(x',x_{N+1}) = v(x',-x_{N+1}) ,  v \geq -p \text{ in } \R^N \times \set {0}, v(x) \to 0 \text{ as } \abs{x} \to \infty  \right \}.
\end{align}
Note that $\cK_p$ is nonempty since for sufficiently large $c>0$ it contains the function
\begin{align}
	w_c(x) :=  \int \limits_{\R^{N} \times \set {0} \cap \set {-p>0} } \frac{c}{ \abs{x-y}^{N-1+a}   } \dH{N}(y). 
\end{align}
Then by the usual arguments in Perron's method we find that
\begin{align}
	v_p := \inf \limits_{v \in \cK_p} v
\end{align}
satisfies 
\begin{align}
v_p &\geq -p ~\text{ in } \R^N \times \set {0} ~,~
L_a v_p =0 ~ \text{ in } \R^{N+1} \setminus \set {x_{N+1}=0, v_p=-p } ~,~	L_a v_p \leq 0 ~ \text{ in } \R^{N+1} \quad 
\end{align}
and the fact that $w_c \in \cK_p$ combined with the maximum principle for the operator $L_a$ (cf. \cite[Theorem 2.2.2]{FabesKenigSerapioni}) we obtain
\begin{align}
	0 \leq v_p(x) \leq w_c(x) \to 0 \quad \text{ as } \abs{x} \to \infty.
\end{align}
 \\
\underline{Uniqueness of $v_p$:} Let us assume that there is another solution $v$ of \eqref{eq:thin_obstacle_problem_zero_boundary_values}. By construction of the Perron solution $v_p$ clearly $v_p \leq v$ in $\R^{N+1}$. Furthermore for every $\epsilon >0$ there is $R_0(\epsilon) >0$ such that for all $R > R_0(\epsilon)$
\begin{align}
v \leq \epsilon \quad \text{ in } \R^{N+1} \setminus B_R 
\end{align} 
and therefore also
\begin{align}
v \leq \epsilon + v_p \quad \text{ on } \partial B_R.
\end{align}
Applying the (local) comparison principle Lemma \ref{lem:comparsion_principle_for_the_nonlinear_equation} with $u_1 = v$ and $u_2 = v_p + \epsilon$ and $\phi = -p$ in $U =B_R$ implies that $v \leq v_p +\epsilon$ in $B_R$ and hence (by choice of $R$) in $\R^{N+1}$. Putting it all together we have for all $\epsilon >0$ that
\begin{align}
	v_p \leq v \leq v_p + \epsilon \quad \text{ in } \R^{N+1}
\end{align}
and therefore $v_p \equiv v$.\\
\underline{Compactness of the coincidence set of $v_p$:}
\begin{itemize}
\item either $-p  \leq  0$ on $\R^N \times \set{0}$ in which case we infer from the uniqueness of $v_p$ that $v_p \equiv 0$ in $\R^{N+1}$, or
\item $\set {-p >0} \cap \bra {\R^N \times \set {0}}$ is bounded due to the asymptotic behavior of $p$ and (relatively) open. In this case we know furthermore that $\{-p >0 \}$ is nonempty. Then $p$ ---being a polynomial--- satisfies $p(x',0) \to \infty$ as $\abs{x'} \to +\infty$. (This follows from the simple fact that a polynomial can only be bounded (from above and below) in any direction if it is already constant in that direction. But $p$ being constant in any direction together with the fact that $\{-p>0\}$ is open and nonempty contradicts the asymptotic behavior of $p$ in $\R^N \times \{0\}$.) Combining the asymptotics of $v_p$ and $p$ in $\R^N \times \set {0}$ we obtain that $\set {v_p =-p} \cap \bra { \R^N \times \set {0}   }$ is bounded and hence compact.
\end{itemize}
\textbf{Step 2.} \emph{The map $S$ is injective.}\\
Suppose that there are $p, \tilde{p} \in \cP_0'$ such that $p \neq \tilde{p}$. Then $S(p) \to p$ and $S(\tilde{p}) \to \tilde{p}$ as $\abs{x} \to \infty$ and thus $S(p) \neq S(\tilde{p})$.\\
\textbf{Step 3.} \emph{The map $S$ is surjective.}\\
Let $u \in \cG_c$ be arbitrary.
Then  $u$ solves (in the sense of distributions)
\begin{align}\label{eq:Euler-Lagrange-equation_of_u}
L_a u =2~  \pra{\abs{x_{N+1}}^a \partial_{N+1} u}_+~ \H^{N}\lfloor_{\set {x_{N+1} =0~,~ u=0}} \quad \text{ in } \R^{N+1},
\end{align}
where we mean by $\pra{\abs{x_{N+1}}^a \partial_{N+1} u}_+(x') = \lim \limits_{x_{N+1} \searrow 0} \abs{x_{N+1}}^a \partial_{N+1} u(x',x_{N+1}) $. 
Let us define the `potential-solution' for the right-hand side depending on the solution $u$, i.e.
\begin{align} \label{eq:representation_of_v_p}
v(x) := \alpha_{N+1+a} \int \limits_{\R^{N+1}} \frac{-2 \pra{\abs{y_{N+1}}^a \partial_{N+1} u}_+(y') }{\abs {x-y}^{N-1+a}}  \dH{N}\lfloor_{\set {y_{N+1} =0~,~u=0}}(y),
\end{align}
where $\alpha_{N+1+a}>0$ is such that $v$ solves (in the sense of distributions)
\begin{alignat}{3}
L_a v &=2~  \pra{\abs{x_{N+1}}^a \partial_{N+1} u}_+~ \H^{N}\lfloor_{\set {x_{N+1} =0~,~ u=0}}  &&~\text{ in } \R^{N+1}  ,  \label{eq:PDE_of_v} \\
 v(x',x_{N+1}) &= v(x', -x_{N+1}) && ~\text{ for all } (x',x_{N+1}) \in \R^N \times \R.
\end{alignat}
This fact can be found either in \cite[Section 2.2]{CaffarelliSilvestre_Extension_problem} or obtained by direct calculation.
From the assumption that $\set {u=0} \cap \set {x_{N+1} =0}$ is compact we conclude from the regularity theory for solutions of \ref{eq:equivalent_local_problem} (cf. \cite[Lemma 4.1]{CaffarelliSalsaSilvestre_Regularity_estimates_fractional_Laplacian}) that $\pra{\abs{x_{N+1}}^a \partial_{N+1} u}_+ \lfloor_{\set {u=0~,~x_{N+1} =0}}$ is bounded. This implies that 
\begin{align} \label{eq:limit_of_v}
\abs{v(x) }\to 0 \quad \text{ uniformly as } \abs{x} \to \infty. 
\end{align}
Combining \eqref{eq:Euler-Lagrange-equation_of_u},\eqref{eq:PDE_of_v} and \eqref{eq:limit_of_v} and the assumption that $u$ has at most polynomial growth we conclude that
\begin{align}
L_a (u-v) \equiv 0 \quad \text{ in } \R^{N+1},
\end{align}
and invoking the Liouville type theorem \cite[Lemma 2.7]{CaffarelliSalsaSilvestre_Regularity_estimates_fractional_Laplacian}  there is a polynomial $p$ such that 
\begin{align}
u-v \equiv p ~ \text{ , } ~ L_a p \equiv 0 \quad  \text{ in } \R^{N+1}
\end{align}
and the function $v =u-p$ solves 
\begin{align}
\begin{cases}
v \geq -p \quad &\text{ in } \R^{N} \times \set {0}, \\
L_a v = 0 \quad &\text{ in } \R^{N+1} \setminus \set {x_{N+1} =0 ~,~ v = -p}, \\
L_a v \leq 0 \quad &\text{ in } \R^{N+1}, \\
v(x) \to 0 \quad &\text{ uniformly as } \abs{x} \to \infty.
\end{cases}
\end{align}
But this means exactly that $v$ is a solution of \eqref{eq:thin_obstacle_problem_zero_boundary_values}. Since $u \in \cG_c$ has a compact coincidence set and $\set {u=0} \cap \bra {\R^N \times \set {0}} = \set {v=-p} \cap \bra {\R^N \times \set {0}} $ we infer that $v$, too, has a bounded coincidence set. And since $p$ is a polynomial this implies that   $p(x',0) > 0 \text{ for sufficiently large } \abs{x'}$.   
\\
(This can be seen as follows. Either $p(x',0)>0$ for all $x' \in \R^N$. Then by uniqueness of solutions of \eqref{eq:thin_obstacle_problem_zero_boundary_values} (cf. Step 1) $v \equiv 0$ in $\R^{N+1}$. Together with the fact that $v$ has compact coincidence set $\{v= -p\} \cap (\R^N \times \{0\})$ this implies that $p(x',0)>0$ for sufficiently large $|x'|$. On the other hand if there is $x_0' \in \R^N$ such that $p(x_0',0)<0$ we conclude from \eqref{eq:thin_obstacle_problem_zero_boundary_values} that for every $\epsilon >0$ there is $R(\epsilon)>0$ such that  $p(x',0) \geq - \epsilon$ for all $x' \in \R^N \setminus B_{R(\epsilon)}$. Since $p$ is a polynomial this implies that $p(x',0) \geq 0$ for sufficiently large $x'$. let us now assume towards a contradiction that there is $({x'}^n)_{n \in \N} \subset \R^N$, $|{x'}^n| \to \infty$ as $n \to \infty$ such that $p({x'}^n,0) =0$ for all $n \in \N$. Then, being a polynomial, $p$ must be constantly equal to zero in some direction $e' \in \partial B_1(x_0',0) \cap \{x_{N+1 } =0\}$. But this contradicts the assumption that $p(x_0',0)<0$.)
\end{proof}

\begin{proof}[Proof of Theorem \ref{thm:convexity_concavity_coincidence_set}]
Let us set for each $h \in \set {x_{N+1}=0}$
\begin{align}
u_h(x) := u(x+h) -2u(x) +u(x-h) \quad \text{for } x \in \R^{N+1}.
\end{align}From $u$ being a solution of \ref{eq:equivalent_local_problem} we infer that
\begin{align} \label{eq:u_h_is_superharmonic}
L_a u_h \leq 0 \quad \text{ in } \R^{N+1} \setminus \bra { \set {x_{N+1}=0} \cap \set {u=0}}.
\end{align}
Theorem \ref{thm:characterization of_global_solutions_of_thin_obst_prob_with_bounded_coincidence_set_and_at_most_poly_growth} implies that the asymptotics of $u$ as $\abs {x} \to \infty$ is given by $p:= S^{-1}(u) \in \cP_0'$, and combining this with the quadratic growth of $u$ we obtain that
either we are in the trivial case $p(x',0) \geq 0$ for all $x' \in \R^N$ whence by uniqueness of $v_p$ we obtain that $u \equiv p$ and $p(x',0) >0$ for large $|x'|$ combined with the quadratic growth of $p$ implies that $\partial _{ee} p \geq 0$ in $\R^{N+1}$ for all $e \in \partial B_1 \cap \{ x_{N+1} =0 \}$ and the claim of the theorem follows.\\
If there is $x_0' \in \R^N$ such that $p(x_0',0) <0$ then $p \in \cP_0'$ and $p$ being quadratic implies that 
\begin{align}\label{eq:convexity_of_p_in_x_N+1=0_directions}
\partial_{ee} p(x) >0 \quad \text{ for each } e \in \set {x_{N+1}=0} \cap \partial B_1. 
\end{align}
Otherwise there would be $\tilde{e} \in \partial B_1 \cap \{x_{N+1} =0\}$ such that $\partial_{\tilde{e} \tilde{e}} \leq 0$ in $\R^{N+1}$. But this combined with the assumption that $p(x_0',0)<0$ contradicts the fact that $p \in \cP_0'$.\\
For sufficiently large $R$ we have 
 using the integral representation of $v=u-p$ in \eqref{eq:representation_of_v_p} that
\begin{align}
	\abs{\partial_{ij} v(x)} \leq C(R)~ \abs{x}^{-(N+1+a)}  \quad \text{ for all } x  \in \R^{N+1} \setminus B_R,~ i,j \in \set {1, \dots, N+1}.
\end{align}
Combining this fact with  \eqref{eq:convexity_of_p_in_x_N+1=0_directions} we obtain that there is $R>0$ such that 
\begin{align}
\partial_{ee} u(x) \geq0 \quad \text{ for all } e \in \set {x_{N+1}=0} \cap \partial B_1 \text{ and } \abs{x}>R. 
\end{align}
This implies that for each $h \in \set {x_{N+1}=0}$ there is $R(h)>0$ such that 
\begin{align} \label{eq:u_h_has_sign_far_away}
u_h(x) \geq 0 \quad \text{ for all } \abs{x} \geq R(h).
\end{align} 
Furthermore, recalling the definition of $u_h$ and the fact that $u \geq 0$ in $\R^N \times \{0\}$ we have that
\begin{align} \label{eq:u_h_has_sign_on_coincidence_set}
u_h(x) \geq 0 \quad \text{for all } x \in \set {u=0} \cap \set {x_{N+1}=0}.
\end{align}
Combining \eqref{eq:u_h_is_superharmonic}, \eqref{eq:u_h_has_sign_far_away} and \eqref{eq:u_h_has_sign_on_coincidence_set} and the minimum principle for supersolutions of the operator $L_a$ (cf. \cite[Theorem 2.2.2]{FabesKenigSerapioni}) we obtain that
\begin{align}
u_h(x) \geq 0 \quad \text{ for all } \abs{x} \leq R(h).
\end{align}
Combining this with \eqref{eq:u_h_has_sign_far_away} we conclude that
\begin{align}
u_h \geq 0 \quad \text{ in } \R^{N+1}.
\end{align}
This implies 
 that $u$ is convex in any direction $e \in \set {x_{N+1}=0} \cap \partial B_1$ and furthermore that the coincidence set $\set {u=0} \cap \set {x_{N+1}=0}$ is convex. 

Combining the above convexity of $u$ in the plane $\set {x_{N+1}=0}$ with $L_a u \leq 0$ in $\R^{N+1}$ (cf. \ref{eq:equivalent_local_problem}), we conclude that
\begin{align}
\partial_{N+1} \bra { \abs{x_{N+1}}^a \partial_{N+1} u } \leq 0 \quad \text{ in } \R^{N+1}.
\end{align}
\end{proof}


\appendix
\section*{Appendix}
Since we could not find a reference in the literature we have included for the sake of completeness the following comparison lemma.
\begin{lem}[Local comparison principle for solutions of \ref{eq:equivalent_local_problem}] \label{lem:comparsion_principle_for_the_nonlinear_equation}
	\mbox{}\\
	Let $U \subset \R^{N+1}$ be a bounded domain, symmetric to $\set {x_{N+1}=0}$, $\varphi \in C^2(\bar{U})$ be symmetric and $u_1, u_2 \in C(\bar{U})$ be two (distributional) solutions of 
\begin{alignat}{3}\label{eq:localalzed_fractional_problem}
u_i &\geq \varphi \quad &&\text{ in } U \cap \set {x_{N+1}=0}, \\
u_i(x',x_{N+1}) &= u_i(x',-x_{N+1}) \quad &&\text{ for  all } (x',x_{N+1}) \in U, \\
L_a u_i   &\leq 0 \quad &&\text{ in } U, \\
L_a u_1  &=0 \quad &&\text{ in } U \setminus \{ x_{N+1}=0, u_1=\varphi  \}, 
\end{alignat}
for $i \in \set {1,2}$ such that 
\begin{align}
u_1 \leq u_2 \quad \text{ on } \partial U,
\end{align}	
then
\begin{align}
u_1 \leq u_2 \quad \text{ in } U.
\end{align}
\end{lem}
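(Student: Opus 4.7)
The strategy is a contradiction argument based on the weighted weak maximum principle. I would suppose that the open set $V := \{u_1 > u_2\} \cap U$ is nonempty and derive a contradiction. By continuity of $u_1, u_2$, the function $w := u_1 - u_2$ vanishes on $\partial V \cap U$, while the hypothesis $u_1 \leq u_2$ on $\partial U$ gives $w \leq 0$ on $\partial V \cap \partial U$, so $w \leq 0$ on all of $\partial V$.

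The key step is to show that $w$ is an $L_a$-subsolution on $V$. The hypothesis $u_2 \geq \varphi$ on $U \cap \{x_{N+1}=0\}$ combined with $u_1 > u_2$ on $V$ yields $u_1 > \varphi$ on $V \cap \{x_{N+1}=0\}$. Hence $V$ is disjoint from the contact set of $u_1$, i.e.
\begin{align}
V \subset U \setminus \{x_{N+1}=0,\, u_1 = \varphi\}.
\end{align}
By the hypothesis on $u_1$ this gives $L_a u_1 = 0$ distributionally in $V$, and together with $L_a u_2 \leq 0$ in $U$ we obtain $L_a w \geq 0$ in $V$ as distributions.

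To conclude, I would apply the weak maximum principle for weighted Sobolev subsolutions of $L_a$ on the bounded open set $V$. Since $|x_{N+1}|^a$ is a Muckenhoupt $A_2$-weight for $a = 1-2s \in (-1,1)$, the theory of Fabes--Kenig--Serapioni (\cite[Theorem 2.2.2]{FabesKenigSerapioni}) gives that $L_a w \geq 0$ in $V$ together with $w \leq 0$ on $\partial V$ implies $w \leq 0$ in $V$. This contradicts the fact that $w > 0$ on $V$ by the very definition of $V$, so $V = \emptyset$ and $u_1 \leq u_2$ in $U$. The main technicality is verifying that the weak maximum principle of Fabes--Kenig--Serapioni genuinely applies on the (possibly very irregular) set $V$, which may touch the degeneracy hyperplane $\{x_{N+1}=0\}$; here the symmetry hypothesis on $u_i$ and on $U$ is essential, since it lets us regard $L_a$ as a bona fide degenerate-elliptic operator on the full $(N+1)$-dimensional domain rather than prescribing a separate weighted Neumann-type condition along $\{x_{N+1}=0\}$.
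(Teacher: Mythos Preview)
Your argument is correct and is essentially the same as the paper's: both exploit that on the region away from $u_1$'s contact set one has $L_a u_1=0$ and $L_a u_2\le 0$, then invoke the Fabes--Kenig--Serapioni weak maximum principle. The only cosmetic difference is that the paper applies the maximum principle directly on $\tilde U:=U\setminus\{x_{N+1}=0,\,u_1=\varphi\}$ (noting $u_1=\varphi\le u_2$ on the added boundary piece), whereas you phrase it as a contradiction on $V=\{u_1>u_2\}\subset\tilde U$.
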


\begin{proof}
	Let $\tilde{U} := U \setminus \bra { \set {u_1 = \varphi} \cap \set {x_{N+1}=0 }}$. Then it holds that
	\begin{alignat}{3}
		L_a u_1&= 0 \quad &&\text{ in } \tilde{U},\\
			L_a u_2 &\leq 0 \quad &&\text{ in } \tilde{U},\\
			u_1 &\leq u_2 \quad &&\text{ on } \partial \tilde{U}
	\end{alignat}
	and therefore invoking the maximum principle for the operator $L_a$ (cf. \cite[Theorem 2.2.2]{FabesKenigSerapioni}) the function $u_1-u_2$ attains its non-positive maximum at the boundary $\partial \tilde{U}$ and hence
	\begin{align}
	u_1-u_2 \leq 0\quad \text{ in } U.
	\end{align}
\end{proof}

\bibliographystyle{abbrv}
\bibliography{compact_Signorini_references}

\begin{thebibliography}{10}

\bibitem{CaffarelliSilvestre_Extension_problem}
L.~Caffarelli and L.~Silvestre.
\newblock An extension problem related to the fractional {L}aplacian.
\newblock {\em Comm. Partial Differential Equations}, 32(7-9):1245--1260, 2007.

\bibitem{Caffarelli-revisited}
L.~A. Caffarelli.
\newblock The obstacle problem revisited.
\newblock {\em J. Fourier Anal. Appl.}, 4(4-5):383--402, 1998.

\bibitem{CaffarelliSalsaSilvestre_Regularity_estimates_fractional_Laplacian}
L.~A. Caffarelli, S.~Salsa, and L.~Silvestre.
\newblock Regularity estimates for the solution and the free boundary of the
  obstacle problem for the fractional {L}aplacian.
\newblock {\em Invent. Math.}, 171(2):425--461, 2008.

\bibitem{DiBenedettoFriedman}
E.~DiBenedetto and A.~Friedman.
\newblock Bubble growth in porous media.
\newblock {\em Indiana Univ. Math. J.}, 35(3):573--606, 1986.

\bibitem{dive}
P.~Dive.
\newblock Attraction des ellipso\"{\i}des homog\`enes et r\'{e}ciproques d'un
  th\'{e}or\`eme de {N}ewton.
\newblock {\em Bull. Soc. Math. France}, 59:128--140, 1931.

\bibitem{esw}
S.~Eberle, H.~Shahgholian, and G.~S. Weiss.
\newblock On global solutions of the obstacle problem -- application to the
  local analysis close to singularities.
\newblock {\em arXiv:2005.04915}, 2020.

\bibitem{ellipsoid}
S.~Eberle and G.~S. Weiss.
\newblock Characterizing compact coincidence sets in the obstacle problem -- a
  short proof.
\newblock {\em Algebra i Analiz, St. Petersburg mathematical journal},
  32:137--145, 2020.

\bibitem{FabesKenigSerapioni}
E.~B. Fabes, C.~E. Kenig, and R.~P. Serapioni.
\newblock The local regularity of solutions of degenerate elliptic equations.
\newblock {\em Comm. Partial Differential Equations}, 7(1):77--116, 1982.

\bibitem{FriedmanSakai}
A.~Friedman and M.~Sakai.
\newblock A characterization of null quadrature domains in {${\bf R}^N$}.
\newblock {\em Indiana Univ. Math. J.}, 35(3):607--610, 1986.

\bibitem{KarpMargulis_bounded_sources}
L.~Karp and A.~S. Margulis.
\newblock Newtonian potential theory for unbounded sources and applications to
  free boundary problems.
\newblock {\em J. Anal. Math.}, 70:1--63, 1996.

\bibitem{Lewy79}
H.~Lewy.
\newblock An inversion of the obstacle problem and its explicit solution.
\newblock {\em Ann. Scuola Norm. Sup. Pisa Cl. Sci. (4)}, 6(4):561--571, 1979.

\bibitem{Sakai}
M.~Sakai.
\newblock Null quadrature domains.
\newblock {\em J. Analyse Math.}, 40:144--154 (1982), 1981.

\bibitem{Shahgholian92_conjecture}
H.~Shahgholian.
\newblock On quadrature domains and the {S}chwarz potential.
\newblock {\em J. Math. Anal. Appl.}, 171(1):61--78, 1992.

\end{thebibliography}

\end{document}